\definecolor{darkred}{RGB}{180,0,0}
\definecolor{darkblue}{RGB}{0,0,180}
\newtheorem{theorem}{Theorem}[section]
\newtheorem{cor}[theorem]{Corollary}
\newtheorem{prop}[theorem]{Proposition}
\newtheorem{lemma}[theorem]{Lemma}
\newtheorem{question}[theorem]{Question}
\newtheorem{rem}[theorem]{Remark}
\theoremstyle{remark}
 \newtheorem{remark}[theorem]{Remark}
\theoremstyle{definition}
\newtheorem{definition}[theorem]{Definition}
\newcommand\set[1]{\left\{#1\right\}}
\newcommand\range[2]{#1,\ldots,#2}
\newcommand{\acts}{\ensuremath\mathrel{\raisebox{.6pt}{$\curvearrowright$}}}
\newcommand*{\defeq}{\mathrel{\vcenter{\baselineskip0.5ex \lineskiplimit0pt
			\hbox{\scriptsize.}\hbox{\scriptsize.}}}%
	=}
\newcommand*{\eqdef}{=\mathrel{\vcenter{\baselineskip0.5ex \lineskiplimit0pt
			\hbox{\scriptsize.}\hbox{\scriptsize.}}}}
\def\NN{{\mathbb N}}
\def\ZZ{{\mathbb Z}}
\def\RR{{\mathbb R}}
\def\st{\ |\ }
\DeclareMathOperator{\dad}{dad}
\DeclareMathOperator{\dimnuc}{dim_{nuc}}
\title[DAD for actions of virtually cyclic groups]{Dynamic asymptotic dimension \\ for actions of virtually cyclic groups}
\author{Massoud Amini}
\address{Department of Mathematics, Faculty of Mathematical Sciences, Tarbiat Modares University, Tehran 14115-134, Iran}
\email{mamini@modares.ac.ir}
\author{Kang Li}
\address{Institute of Mathematics of the Polish Academy of Sciences, Śniadeckich 8, 00-656 Warsaw, Poland}
\email{kli@impan.pl}
\thanks{KL has received funding from the European Research Council (ERC) under the European Union's Horizon 2020 research and innovation programme (grant agreement no. 677120-INDEX)}
\author{Damian Sawicki}
\address{Max-Planck-Institut f\"{u}r Mathematik, Vivatsgasse 7,	53111 Bonn,	Germany}
\curraddr{KU Leuven, Department of Mathematics, Celestijnenlaan 200b – box 2400, B-3001 Leuven, Belgium}
\urladdr{sites.google.com/view/damiansawicki}
\thanks{DS thanks the Max Planck Institute for Mathematics in Bonn for its hospitality and the Max Planck Society for financial support.}
\author{Ali Shakibazadeh}
\address{Department of Mathematics, Faculty of Mathematical Sciences, Tarbiat Modares University, Tehran 14115-134, Iran}
\email{a.shakibazadeh@modares.ac.ir}
\subjclass[2010]{Primary: 37C45; Secondary: 37B05, 20F69}
\keywords{Dynamic asymptotic dimension, marker property, minimal action, nuclear dimension, virtually cyclic group.}
\begin{document}
\begin{abstract}
We show that the dynamic asymptotic dimension of an action of an infinite virtually cyclic group on a compact Hausdorff space is always one if the action has the marker property. This particularly covers a well-known result of Guentner, Willett, and Yu for minimal free actions of infinite cyclic groups. As  a direct  consequence,  we substantially extend  a  famous  result by  Toms  and  Winter  on  the  nuclear  dimension  of $C^*$-algebras  arising  from minimal free $\mathbb{Z}$-actions.

Moreover, we also prove the marker property for all free actions of countable groups on finite dimensional compact Hausdorff spaces, generalising a result of Szabó in the metrisable setting.
\end{abstract}

\maketitle

\section{Introduction}
Dynamic asymptotic dimension was introduced by Erik Guentner, Rufus Willett, and Guoliang Yu in \cite{gwy}. This is a notion of dimension for
actions of discrete groups on locally compact spaces, and it was also defined in the (more general) setting of locally
compact \'{e}tale groupoids. It is related to transfer reducibility of Bartels, L\"{u}ck, and Reich \cite{blr} and asymptotic dimension of Gromov \cite{gr}, and it can be used  \cite{ds} to bound the corresponding nuclear dimension of Winter and Zacharias \cite{wz} or to prove \cite{gwy2} instances of the Baum--Connes conjecture.

The paper \cite{gwy} calculates the exact value of dynamic asymptotic dimension for two classes of examples: the action of an arbitrary countable group on its Stone--\v{C}ech compactification and an arbitrary minimal action of $\mathbb Z$. The result in the latter case is that unless the space acted upon is finite, the dynamic asymptotic dimension of the action is always one \cite[Theorem 3.1]{gwy}.
The proof follows closely the ideas of Ian Putnam in building
$AF$-algebras associated to minimal  $\mathbb Z$-actions on the Cantor set \cite{p}. Note that minimal $\mathbb Z$-actions on infinite spaces are automatically free.

The purpose of this article is to prove the same rigidity result for free actions of infinite virtually cyclic groups. More precisely, we obtain it for free actions of infinite virtually cyclic groups on finite-dimensional compact Hausdorff spaces and on arbitrary compact Hausdorff spaces in the case of minimal actions. In particular, we show that the  dynamic asymptotic dimension of minimal free actions of the infinite dihedral group on compact spaces is always one.

The class of virtually cyclic groups plays a central role in geometric group theory. For instance, it appears in the Farrell--Jones conjecture \cite{fj}, which in turn implies the Borel conjecture \cite{Borel}. The proofs of instances of the Farrell--Jones conjecture involve establishing the finiteness of certain relative versions of equivariant asymptotic dimension (also known as the finite $\mathcal F$-amenability), a notion closely related to the dynamic asymptotic dimension.

Calculating the value of dynamic asymptotic dimension has so far proved to be rather difficult (cf.\ a question of Willett in \cite[Question 8.9]{piecewise}). A significant progress was made in \cite{SWZ}, where an exponential upper bound was obtained for free actions of nilpotent groups (on compact metric spaces of finite covering dimension), which was later improved to a linear estimate in \cite{piecewise}. These bounds depend on the dimension of the space acted upon. Already in the context of establishing the finiteness of dynamic asymptotic dimension rather than calculating its exact value, the relation between an action and its restriction to a finite index subgroup is not clear, and a different argument was used in order to extend the finiteness result of \cite{SWZ} to the class of \emph{virtually} nilpotent groups \cite{Bartels}. For actions on zero-dimensional spaces, the dynamic asymptotic dimension is known to be equal to certain other notions of dimension \cite{Kerr}, including the amenability dimension.

The finiteness of dynamic asymptotic dimension conjecturally implies finite dynamical complexity \cite{gwy2}, but at present this is only known if the dynamic asymptotic dimension equals zero or one. Therefore, it follows from our main result that many free actions of infinite virtually cyclic groups have finite dynamical complexity.

\section{Definitions and Results}

In the sequel, $\Gamma$ will denote a discrete group acting on a compact Hausdorff space $X$ by homeomorphisms. We shortly denote such an action by $\Gamma\acts X$. For a finite subset $E$ of $\Gamma$ we use the notation $E\Subset \Gamma$, and we denote the identity element of $\Gamma$ by $e$. When $A\subseteq \Gamma$ and $Y\subseteq X$, by $AY$ we will denote the union of translates of $Y$ by elements of $A$, namely $\bigcup_{g\in A} g Y = \{gy \st g\in A,\ y\in Y\}$.

Guentner,  Willett, and Yu define $\Gamma \acts X$ to have dynamic asymptotic dimension at most $d$ if for any $E\Subset \Gamma$ the space $X$ can be divided into $d + 1$
pieces in such a way that on each piece the ``action'' has ``finite complexity'' with respect to $E$ \cite{gwy}. More precisely, we have the following definition.

\begin{definition}\label{dad} For $E\subseteq \Gamma$ and an open subset $U\subseteq X$, the equivalence relation $\sim_{U,E}$ on $U$ generated by $E$ is defined as follows:  for $x, y \in U$,
  $x \sim_{U,E} y$ if there is $n\in\NN$ and a finite sequence
  $x = x_0, x_1,\ldots, x_n = y$
  in $U$ such that for each $1\leq j\leq n$, there exists $g\in E \cup E^{-1}$ such
  that $g x_{j-1} = x_j$.

  The \emph{dynamic asymptotic dimension} of a free action $\Gamma \acts X$ (denoted $\dad(\Gamma \acts X)$) is the
  smallest integer $d\in \NN$ with the following property: for each finite subset $E\Subset \Gamma$, there is an open cover $\{U_0,\ldots, U_d\}$ of $X$ such that for each $0\leq i\leq d$ the equivalence relation $\sim_{U_i,E}$ on $U_i$ has uniformly finite equivalence classes. If no such $d$ exists, we say that the dimension is infinite.
\end{definition}

When $U$ is the whole of $X$, the relation $\sim_{U,E} $ is the equivalence relation of being in the same $\langle E\rangle$-orbit, where $\langle E\rangle$ denotes the subgroup generated by $E$. If $\dad(\Gamma \acts X) = 0$, then there is no choice but to take $U_0=X$, and hence $\dad(\Gamma \acts X) = 0$ implies that $\langle E\rangle$-orbits are finite for every $E\Subset \Gamma$, i.e.\ $\Gamma$ is locally finite. However, if $U_i$ is a proper subset of $X$, then equivalence classes of  $\sim_{U_i,E}$ may be smaller than the intersection of $U_i$ with $\langle E\rangle$-orbits, and \cref{dad} requires a uniform bound on their cardinality (depending only on $E$).

Recall that an action $\Gamma \acts X$ is \emph{free} if only the identity element $e$ fixes a point in $X$. When this is not the case, in the above definition the uniform finiteness of equivalence classes has to be replaced by the finiteness of the following sets for $0\leq i\leq d$:
\begin{equation*}
    F(U_i, E) = \left\{\; g\in \Gamma \;\;\bigg|\;\;
    \begin{aligned}
      &\text{there exist } g_1,\ldots,g_n\in E \text{ and } x\in U_i \text{ such that } \\
      &g=g_n\cdots g_2g_1 \text{ and } g_j\cdots g_1x\in U_i \text{ for all } j\in \{1,\ldots,n\}
    \end{aligned}
    \;\right\}
  \end{equation*}
 (in this formulation, it is perhaps most natural to consider only symmetric $E\Subset\Gamma$).

\smallskip An action $\Gamma \acts X$ is \emph{minimal} if $X$ has no proper and non-empty closed (equivalently, open) $\Gamma$-invariant subsets. Minimality is the same as requiring that all $\Gamma$-orbits are dense in $X$. By the Ku\-ra\-tow\-ski--Zorn lemma, every compact $\Gamma$-space contains a closed $\Gamma$-invariant subset on which $\Gamma$ acts minimally.

\smallskip A discrete group $\Gamma$ is called \emph{virtually cyclic} (or \emph{cyclic by finite}) if it has a cyclic subgroup of finite index. When $\Gamma$ is infinite, this means that there is a finite index copy of $\mathbb Z$ inside~$\Gamma$. It follows that infinite virtually cyclic groups are residually finite and finitely presented.

An infinite virtually cyclic group $\Gamma$ is known to have a finite normal subgroup $N$ such that $\Gamma/N$ is either infinite cyclic or infinite dihedral (see e.g.\ Theorem~6.12 in Chapter~IV of \cite{dd}); depending on this one sometimes classifies $\Gamma$ into \emph{type I} or \emph{type II}. Moreover, in an infinite virtually cyclic group $\Gamma$, every infinite cyclic subgroup of $\Gamma$ has finite index. Indeed, if $H, K$ are two infinite cyclic subgroups of $\Gamma$ and $H$ is of finite index, then $H\cap K$ must be an infinite subgroup of $H$, so $[H: H\cap K]<\infty$. Since $H$ is of finite index, we get $[\Gamma: H\cap K]<\infty$, and hence $[\Gamma:K]<\infty$.

\smallskip The main result of this paper extends Theorem~3.1 of  \cite{gwy} from the case of infinite cyclic to that of infinite virtually cyclic groups and allows one to drop the minimality assumption.

\begin{theorem}\label{main}
  Let $ \Gamma \acts X $ be an action of an infinite virtually cyclic group on a compact Hausdorff space. Then
  \[\dad (\Gamma \acts X)= 1\]
  if $ \Gamma \acts X $ has the marker property, e.g.\ when it is free, and
  \begin{enumerate}
    \item $ \Gamma \acts X $ is minimal, or
    \item\label{fin-dim} $X$ has finite covering dimension.
  \end{enumerate}
\end{theorem}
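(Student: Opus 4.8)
The plan is to reduce everything to the structure of infinite virtually cyclic groups and to a marker-type covering argument. First, the lower bound $\dad(\Gamma\acts X)\geq 1$ is the easy direction: since $\Gamma$ is infinite it is not locally finite, so by the discussion following \cref{dad} we cannot have $\dad(\Gamma\acts X)=0$ (there is no finite $\langle E\rangle$-orbit condition once $E$ generates an infinite subgroup, and marker property/freeness rules out the degenerate finite-space case). So the content is the upper bound $\dad(\Gamma\acts X)\leq 1$: for every $E\Subset\Gamma$ we must produce an open cover $\{U_0,U_1\}$ of $X$ such that each $\sim_{U_i,E}$ (resp.\ each $F(U_i,E)$ in the non-free case) is uniformly finite.

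The key step is to pass to the finite-index copy of $\mathbb Z$. Let $N\trianglelefteq\Gamma$ be the finite normal subgroup with $\Gamma/N$ infinite cyclic or infinite dihedral, and fix a cyclic subgroup $\mathbb Z\cong H\leq\Gamma$ of finite index, so $\Gamma=\bigsqcup_{k=1}^m t_k H$ for finitely many coset representatives $t_k$. Given $E\Subset\Gamma$, enlarge it to a finite symmetric set $E'$ containing $N$, the $t_k$, and a generator of $H$, chosen large enough that the $\langle E\rangle$-dynamics is controlled by finitely many powers of the generator of $H$ together with the bounded "twists" coming from $N$ and the coset representatives. I would then invoke the marker property to obtain, for a suitably large $n$, an open set $B\subseteq X$ ("marker set") such that the translates $\{gB : g\in F_n\}$ cover $X$ for a controlled Følner-type set $F_n\subseteq H$, while $B$ meets each $H$-orbit in a set that is $n$-separated along $H$. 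From $B$ one builds a Rokhlin-type open tower decomposition of $X$ into finitely many "columns," and the standard trick (following Putnam and \cite[Theorem 3.1]{gwy}) is to cut each column into two open pieces $U_0,U_1$ by removing a single "floor": on each piece the $E'$-equivalence relation can only move a point a bounded number of steps before leaving the piece, giving the required uniform finiteness. The finite group $N$ and the finitely many coset representatives only multiply the relevant bounds by constants, since conjugation by elements of $\Gamma$ permutes a controlled neighbourhood of $H$.

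The main obstacle — and the reason the two hypotheses (1) and (2) appear — is precisely the availability of the marker property, which is what makes the Rokhlin tower construction possible in the non-metrisable, non-Cantor setting. For minimal actions one has markers essentially by compactness and minimality (small marker sets can be translated to cover $X$, and an iterated construction separates them along $H$); for free actions on finite-dimensional $X$ the marker property is supplied by the paper's other main result (the generalisation of Szabó's theorem). So in the body of the proof I would first reduce "has the marker property" for $\Gamma$ to "has the marker property" for the finite-index subgroup $H\cong\mathbb Z$ — using that $H$-orbits and $\Gamma$-orbits coincide up to the finite factor $[\Gamma:H]$, so an $H$-marker set is automatically a $\Gamma$-marker set up to enlarging $n$ — and then run the $\mathbb Z$-argument of \cite{gwy} on $H$, finally descending the resulting $2$-element cover back to $\Gamma$ by noting that $\sim_{U_i,E}$ for $\Gamma$ is contained in a bounded enlargement of $\sim_{U_i,E'}$ for $H$. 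The last descent step, making the bound on $F(U_i,E)$ genuinely uniform in the presence of the non-abelian (dihedral) quotient, is the place where one has to be careful: one uses that $E'$ is symmetric and that the non-trivial coset of $H$ in the dihedral case acts by inversion on $H$ up to $N$, so a path in $U_i$ using such elements still corresponds to a bounded excursion in $H$.
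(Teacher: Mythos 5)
The central step of your plan --- run the $\mathbb Z$-argument of \cite{gwy} on a finite-index copy $H\cong\ZZ$ and then ``descend the resulting $2$-element cover back to $\Gamma$'' --- is exactly the step that does not work, and it is the reason the paper's proof is structured differently. The sets $F(U_i,E)$ (or the classes of $\sim_{U_i,E}$) are generated by paths whose individual steps lie in $E\subseteq\Gamma$, hence may use elements of the nontrivial cosets of $H$ at every step; a cover $\{U_0,U_1\}$ for which the relation generated by $E'\cap H$ has uniformly finite classes gives no control over such paths, because an $E$-path inside $U_i$ does not project to, and is not boundedly close to, an $(E'\cap H)$-path inside $U_i$. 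Your claim that ``$\sim_{U_i,E}$ for $\Gamma$ is contained in a bounded enlargement of $\sim_{U_i,E'}$ for $H$'' is asserted, not proved, and there is no known mechanism for it: the paper itself points out (in the introduction, citing \cite{Bartels}) that the relation between the dynamic asymptotic dimension of an action and of its restriction to a finite-index subgroup is not understood, which is precisely why the authors avoid this route. In the type II (dihedral-quotient) case the problem is acute: elements of the nontrivial coset conjugate $H$ by inversion, so a long $E$-path can repeatedly reverse direction, and ``bounded excursion in $H$'' is not a consequence of the group structure --- it has to be extracted from the geometry of the cover. This is the actual content of the paper's argument for the set $U_1$, where an intermediate-value argument along the isometry of $\Gamma/N$ (with $N$ the maximal finite normal subgroup) with $\ZZ\subseteq\RR$ forces any long path in $U_1$ to pass through $p^{-1}(B_N)\overline V$, contradicting the definition of $U_1$.

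There is also a secondary gap in the marker reduction: the hypothesis gives $F$-markers for the $\Gamma$-action, i.e.\ open $U$ with pairwise disjoint $F$-translates and $\Gamma U=X$; this does not yield $HU=X$, so the restricted action $H\acts X$ is not known to have the marker property, and the Rokhlin-tower construction for $H$ cannot be launched as stated (nor does your ``up to enlarging $n$'' remark address this, since the missing translates come from cosets of $H$, not from longer $H$-translates). The paper sidesteps both issues by never restricting to $H$: it quotients $\Gamma$ by its maximal finite normal subgroup, notes that the quotient ($\ZZ$ or $D_\infty$) is isometric to $\ZZ$ as a metric space, takes $E=p^{-1}(B_N)$, uses a single $\Gamma$-marker $U$ with disjointness along $p^{-1}(B_{5N})$, and defines $U_0=p^{-1}(B_N)U$ and $U_1=X\setminus p^{-1}(B_N)\overline V$ with direct contradiction arguments bounding $p(F(U_i,E))$. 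Your outline of the lower bound and of where hypotheses (1) and (2) enter (via \cref{lemma1} and \cref{no-metrisability}) matches the paper, but as written the upper bound rests on an unproved --- and likely unprovable by these means --- transfer principle, so the proposal has a genuine gap.
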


This type of rigidity result has not been known for any concrete example of virtually cyclic groups except for the group of integers. In particular, \cref{main} applies to free minimal actions of the infinite dihedral group.

\Cref{fin-dim} of Theorem~\ref{main} follows from the following result of independent interest (for the definition of marker property, see Definition~\ref{marker-def}).

\begin{prop}\label{no-metrisability} Let $\Gamma \acts X$ be a free action of a countable group $\Gamma$ on a compact Hausdorff space $X$. If $\dim X$ is finite, then the action has the marker property.
\end{prop}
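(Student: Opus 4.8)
The plan is to reduce everything to the metrisable case via an equivariant inverse limit. I would first recall (from \cref{marker-def}) that the marker property asks, for each finite symmetric $F\subseteq\Gamma$ with $e\in F$, for an open set $U\subseteq X$ with $\Gamma U=X$ and $gU\cap U=\emptyset$ for all $g\in F\setminus\{e\}$; and I would record the elementary but crucial fact that this condition is \emph{pulled back} along any equivariant continuous surjection $\pi\colon X\to Y$, since if $V\subseteq Y$ is an $F$-marker then $\pi^{-1}(V)$ satisfies $\Gamma\pi^{-1}(V)=\pi^{-1}(\Gamma V)=X$ and $g\pi^{-1}(V)\cap\pi^{-1}(V)=\pi^{-1}(gV\cap V)=\emptyset$. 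So it suffices to find, for each $F$, a compact metrisable $\Gamma$-quotient of $X$ admitting an $F$-marker, reducing matters to the metrisable situation handled by Szabó.

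Next I would write $X=\varprojlim_\lambda X_\lambda$ as a directed $\Gamma$-equivariant inverse system of compact metrisable $\Gamma$-spaces, with equivariant surjective bonding maps and projections $\pi_\lambda\colon X\to X_\lambda$ — possible because $\Gamma$ is countable — and, invoking the classical description of covering dimension of compact Hausdorff spaces (finite open covers have finite open refinements of order $\le\dim X+1$, and such refinements factor through metrisable quotients), I would arrange $\dim X_\lambda\le\dim X<\infty$ for every $\lambda$. The point where the hypotheses of the metrisable result appear to break down is \emph{freeness}: a quotient of a free action need not be free. The observation that rescues the argument is twofold. On the one hand, for a fixed $F$ the metrisable marker construction uses freeness only through the existence of a finite open cover by sets $W$ with $gW\cap W=\emptyset$ for $g$ in some finite symmetric $F'\supseteq F$ depending on $F$ and on $\dim X$ — that is, it needs only that no element of $F'\setminus\{e\}$ has a fixed point. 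On the other hand, since $\Gamma\acts X$ is free, for each $g\in F'\setminus\{e\}$ and each $x$ the points $x$ and $gx$ are distinct; a routine compactness argument (the ``bad set'' $X\times(F'\setminus\{e\})$ is compact, the $\pi_\lambda$ jointly separate points, and the system is directed) then shows that for all sufficiently large $\lambda$ the map $\pi_\lambda$ separates $x$ from $gx$ for all $x\in X$ and $g\in F'\setminus\{e\}$; equivalently, $F'\setminus\{e\}$ acts on $X_\lambda$ without fixed points.

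With this in hand the proof closes: given $F$, pick $\lambda$ large enough that $F'\setminus\{e\}$ acts on $X_\lambda$ without fixed points; since $X_\lambda$ is compact, metrisable and finite-dimensional, the metrisable marker lemma (applied essentially verbatim, using only $F'$-freeness) yields an $F$-marker $U_\lambda\subseteq X_\lambda$; then $U:=\pi_\lambda^{-1}(U_\lambda)$ is an $F$-marker in $X$, and as $F$ was arbitrary the action has the marker property.

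I expect the main obstacle to be exactly the freeness issue: one cannot simply quote Szabó's theorem as a black box, because its hypothesis genuinely fails for the metrisable quotients $X_\lambda$, so one must inspect its proof and verify that, for a single finite $F$, it relies only on the fixed-point-freeness of a finite set $F'$ (after which $\lambda$ can be chosen by compactness). A secondary technical point, which I would treat as a known fact but which does real work, is producing the $\Gamma$-equivariant metrisable inverse-limit presentation \emph{together with the dimension bound} $\dim X_\lambda\le\dim X$ — this uses the theory of covering dimension of compact Hausdorff spaces, since an arbitrary metrisable quotient of a finite-dimensional compactum may well be infinite-dimensional.
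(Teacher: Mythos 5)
Your overall architecture --- pull back an $F$-marker along an equivariant surjection onto a metrisable factor of covering dimension at most $\dim X$, with the factorisation supplied by a Marde\v{s}i\'c-type theorem (this is Lemma~1.3 of \cite{hw} in the paper) --- is exactly the paper's. The place where your argument diverges is the freeness of the factor, and that is where you leave a genuine gap: you reduce to showing that, for the given $F$, some finite symmetric $F'$ acts on $X_\lambda$ without fixed points, and then assert that Szab\'o's Theorems~3.8 and~4.6 of \cite{Szabo} (small boundary property, hence marker property) go through ``essentially verbatim'' under this weaker hypothesis. That localisation claim is never verified, and you yourself flag it as the main obstacle; as written, the proof is not complete, since both of Szab\'o's statements are proved for free actions and it is not obvious without inspecting his arguments that, for a fixed $F$, only fixed-point-freeness of a predetermined finite set is used.

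The paper avoids this entirely with a small observation you are missing (its Lemma~\ref{freeness}): freeness of $\Gamma\acts X$ is \emph{witnessed by countably many continuous functions}. For each $\gamma\neq e$ there is, by compactness, a finite partition of unity $\set{\phi_i^\gamma}$ with $\gamma\phi_i^\gamma\cdot\phi_i^\gamma=0$, and conversely any factor $Y$ with $C(Y)$ containing all these functions carries a genuinely free action of all of $\Gamma$. Feeding this countable set $S$ into the Marde\v{s}i\'c/Hirshberg--Wu factorisation produces a \emph{single} metrisable factor $Y$ with $\dim Y\leq\dim X$ and $S\subseteq C(Y)$, on which the action is free, so Szab\'o's theorems apply as black boxes and the marker pulls back. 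Note that this is just your own compactness argument pushed one step further: instead of separating $x$ from $gx$ for $g$ in one finite set $F'$ and then passing to a large $\lambda$, separate $x$ from $\gamma x$ for every $\gamma\in\Gamma\setminus\set e$ (finitely many functions per $\gamma$, countably many in total) and apply the factorisation theorem once to the resulting countable family. With that modification your proof closes without any claim about the internals of \cite{Szabo}.
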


For $\ZZ$-actions, removing the minimality assumption from the result of Guentner, Willett, and Yu gives the following dimension reduction phenomenon.

\begin{rem}\label{finiteness-to-one} Let $\ZZ$ act by homeomorphisms on a compact Hausdorff space $X$ with $\dim X < \infty$. Then
$\dad(\ZZ\acts X)<\infty \implies \dad(\ZZ\acts X) = 1.$
\end{rem}
\begin{proof}
It is easy to see that the finiteness of $\dad$ implies that point stabilisers must be locally finite, but since $\ZZ$ is torsion-free, it implies in our case that the action is free. Now apply Theorem~\ref{main}.
\end{proof}

As already mentioned in the introduction, the notion of dynamic asymptotic dimension provides a way to compute an upper bound on the nuclear dimension of C$^*$-algebras arising from topological dynamical systems. The following corollary generalises \cite[Corollary~8.23]{gwy} to the setting considered here and additionally removes the metrisability assumption following the ideas of \cite[Corollary~5.4]{hw}.

\begin{cor}\label{nuclear for virtually cyclic}
Let $X$ be a compact Hausdorff space. If $\Gamma$ is a virtually cyclic group acting freely on $X$, then the nuclear dimension satisfies
\[\dimnuc(C(X)\rtimes_\mathrm{r}\Gamma)\leq 2\cdot \dim X +1,\]
where $C(X)\rtimes_\mathrm{r}\Gamma$ denotes the associated reduced crossed product $C^*$-algebra.
\end{cor}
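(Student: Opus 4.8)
The plan is to combine Theorem~\ref{main} with the general machinery relating dynamic asymptotic dimension to nuclear dimension, and then to handle the two cases (finite or torsion $\Gamma$ versus infinite virtually cyclic $\Gamma$) separately. First I would dispose of the degenerate cases. If $\Gamma$ is finite, then $C(X)\rtimes_\mathrm r\Gamma$ is a finite direct sum of matrix algebras over quotients of $C(X)$ (or, more cleanly, one invokes that a free action of a finite group has dynamic asymptotic dimension $0$, so the crossed product has nuclear dimension at most $\dim X$), and the bound is immediate. So assume $\Gamma$ is infinite virtually cyclic. Since $\Gamma$ acts freely on $X$, Proposition~\ref{no-metrisability} applies (note $\dim X<\infty$, else the inequality is vacuous), giving the marker property; hence by case~\eqref{fin-dim} of Theorem~\ref{main} we get $\dad(\Gamma\acts X)=1$.

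Next I would feed this into the estimate $\dimnuc(C(X)\rtimes_\mathrm r\Gamma)\le (\dad(\Gamma\acts X)+1)(\dim X+1)-1$. In the metrisable case this is \cite[Theorem~8.6]{gwy} (whose Corollary~8.23 is exactly the special case $\dad\le 1$), and with $\dad=1$ it yields precisely $\dimnuc(C(X)\rtimes_\mathrm r\Gamma)\le 2\dim X+1$. The one genuine subtlety is that $X$ is only assumed compact Hausdorff, not metrisable, whereas the cited results of \cite{gwy} are stated for metrisable $X$ (or for étale groupoids with metrisable unit space). This is where the reference to \cite[Corollary~5.4]{hw} enters: one uses that $X$ is an inverse limit of compact metrisable $\Gamma$-spaces $X_i$ — obtained by taking quotients of $X$ by the equivalence relations coming from countable families of $\Gamma$-invariant continuous functions, using that $\Gamma$ is countable — so that $C(X)\rtimes_\mathrm r\Gamma = \varinjlim\, C(X_i)\rtimes_\mathrm r\Gamma$ is an inductive limit of crossed products over metrisable spaces. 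Since nuclear dimension does not increase under inductive limits, and since each quotient satisfies $\dim X_i\le \dim X$ and inherits freeness (here one must be slightly careful: freeness need not pass to an arbitrary quotient, so one arranges the maps $X\to X_i$ to separate points on distinct orbits of the finitely many relevant group elements, making the $X_i$ free for a cofinal system), the metrisable bound $\dimnuc(C(X_i)\rtimes_\mathrm r\Gamma)\le 2\dim X+1$ passes to the limit.

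The main obstacle, then, is not Theorem~\ref{main} itself — that does all the dynamical work — but rather the careful bookkeeping in the reduction to the metrisable case: ensuring that the approximating metrisable quotients can be chosen simultaneously to have covering dimension at most $\dim X$, to carry a free $\Gamma$-action, and to form a cofinal inverse system whose limit recovers $X$. Once the quotients $X_i$ are set up correctly this is routine, since each $\dad(\Gamma\acts X_i)=1$ again by Theorem~\ref{main} (or by a direct pushforward of the marker property and covers from $X$), and the inductive-limit permanence of $\dimnuc$ is standard. An alternative route, avoiding the dimension-control issue for quotients, is to cite instead a version of the $\dad$-to-$\dimnuc$ inequality proved directly for non-metrisable groupoids; but the $\varinjlim$ argument following \cite{hw} is the cleanest and is the one I would present.
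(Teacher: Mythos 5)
Your proposal is correct and follows essentially the same route as the paper: Theorem~\ref{main} gives $\dad\le 1$, the estimate $\dimnuc\le(\dad+1)(\dim X+1)-1$ from \cite{gwy} handles the metrisable case, and the general case is reduced to it via metrisable factors with controlled dimension and freeness (the paper uses Lemma~\ref{freeness} together with \cite[Lemma~1.3]{hw}) plus the permanence of nuclear dimension under direct limits. Only a cosmetic remark: freeness of the factors must be arranged for \emph{all} (countably many) nontrivial group elements, not just ``finitely many relevant'' ones, which is exactly what the countable set produced by Lemma~\ref{freeness} accomplishes.
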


\begin{remark}
It is worth noting that Corollary~\ref{nuclear for virtually cyclic} generalises the main technical result of \cite{TW}, Theorem~0.3, in two directions: from infinite cyclic groups to virtually cyclic groups and from minimal free actions to free actions. Additionally, it drops the metrisability assumption.

In the case of actions of $\ZZ$, the main result of \cite{hw} removes the freeness and metrisability assumption from \cite{TW} (at the price of replacing the linear bound from \cite{TW} and our Corollary~\ref{nuclear for virtually cyclic} with a quadratic upper bound).
\end{remark}

\section{Proofs}

\subsection{Marker property}
The following notion was introduced by Downarowicz \cite{Downarowicz} and in a modified form by Gutman \cite{Gutman}. Here we adapt Gutman's definition.  

\begin{definition}\label{marker-def} Let $\Gamma\acts X$ be a topological dynamical system and $F\Subset \Gamma$ a finite subset. We call an open set $U\subset X$ an \emph{$F$-marker}, if
\begin{itemize}
\item the family of sets $\set{g U : g \in F}$ is pairwise disjoint,
\item $X = \Gamma U$, i.e.\ translates of $U$ cover $X$.
\end{itemize}
We say that $\Gamma\acts X$ has the \emph{marker property} if there exist $F$-markers for all $F\Subset \Gamma$.
\end{definition}

For $\mathbb Z$-actions, the marker property is shown to be equivalent
to  strong  topological Rokhlin property, and systems with  marker property admit a compatible metric with
respect to which the metric mean dimension equals the (topological) mean
dimension \cite{Gutman2}. It is straightforward that the marker property holds for minimal free actions (see the following \cref{lemma1}), and  Szab\'o proved it for free actions on finite-dimensional metrisable spaces \cite{Szabo}. Proposition \ref{no-metrisability} removes the metrisability assumption.

\begin{lemma}\label{lemma1}
  Let $ \Gamma \acts X $ be a minimal free action by homeomorphisms of a group $ \Gamma $ on a compact Hausdorff space $X$. Then $ \Gamma \acts X $ has the marker property.
\end{lemma}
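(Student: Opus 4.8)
The plan is to exploit minimality and compactness to produce, for a given $F\Subset\Gamma$, a small open set whose $F$-translates are disjoint, and then to use minimality again to conclude that finitely many $\Gamma$-translates of it already cover $X$. Without loss of generality I may assume $e\in F$ and that $F$ is symmetric. First I would use freeness to separate points: for each $x\in X$ and each $g\in F\setminus\{e\}$ we have $gx\neq x$ (indeed $g^{-1}\cdot gx = x$ would force a nontrivial stabiliser, but here it is cleaner to note $gx\ne x$ directly when $g\ne e$), so by the Hausdorff property there is an open neighbourhood $V_{x,g}$ of $x$ with $g V_{x,g}\cap V_{x,g}=\emptyset$; one can even arrange $g V_{x,g}\cap V_{x,g'}=\emptyset$ for all pairs in the finite set $F$ by shrinking, using that the map $y\mapsto gy$ is a homeomorphism. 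Intersecting the finitely many $V_{x,g}$ over $g\in F$ gives an open neighbourhood $W_x$ of $x$ such that the sets $\{gW_x : g\in F\}$ are pairwise disjoint.

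The second step is to pass from these local neighbourhoods to a single open set $U$ with $X=\Gamma U$. Here minimality enters: for any nonempty open $W\subseteq X$, the saturation $\Gamma W=\bigcup_{g\in\Gamma} gW$ is a nonempty open $\Gamma$-invariant set, hence equals $X$ by minimality. So I would fix one point $x_0$, set $U\defeq W_{x_0}$, and immediately get both required properties: the sets $\{gU : g\in F\}$ are pairwise disjoint by construction, and $\Gamma U = X$ by minimality. (Compactness is not even needed for this argument, though it is harmless; the statement includes it only for uniformity with the rest of the paper.) Thus $U$ is an $F$-marker, and since $F\Subset\Gamma$ was arbitrary, $\Gamma\acts X$ has the marker property.

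The only slightly delicate point is the first step — arranging disjointness of \emph{all} translates $\{gU:g\in F\}$ simultaneously rather than merely $U\cap gU=\emptyset$ for each $g$. The cleanest way is to observe that $gU\cap hU=\emptyset$ is equivalent to $U\cap (g^{-1}h)U=\emptyset$, so it suffices to ensure $U\cap kU=\emptyset$ for every $k$ in the finite set $F^{-1}F\setminus\{e\}$; one then runs the separation argument above over this enlarged finite set instead of $F$. Since $F^{-1}F$ is finite whenever $F$ is, and freeness gives $kx_0\neq x_0$ for each such $k\neq e$, the Hausdorff separation plus a finite intersection of neighbourhoods does the job. I do not expect any real obstacle here; the lemma is essentially a packaging of freeness (to separate a point from its translates) with minimality (to spread a single small open set over all of $X$).
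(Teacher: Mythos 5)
Your argument is correct and follows essentially the same route as the paper: reduce pairwise disjointness of $\{gU : g\in F\}$ to $U\cap kU=\emptyset$ for $k\in F^{-1}F\setminus\{e\}$, separate a single point from its finitely many translates using freeness and Hausdorffness, intersect the resulting neighbourhoods, and invoke minimality to get $\Gamma U=X$ (and indeed, as you note, compactness is not needed). The only caveat is that the intermediate claim in your first paragraph about arranging $gV_{x,g}\cap V_{x,g'}=\emptyset$ for all pairs in $F$ is imprecise as stated, but your final paragraph's reduction to $F^{-1}F\setminus\{e\}$ is exactly the clean fix and is what the paper does.
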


\begin{proof}
  Let $F\Subset \Gamma$ be given. We will find $U$ such that $U \cap h U = \emptyset$ for every $h\in E\defeq F^{-1}F \setminus \set e$, and hence $F$-translates of $U$ are disjoint.
  The claim is trivial if $X$ is empty, so pick $ x\in X $. By freeness, we have $ x\neq gx $ for every $ g\in E $. By Hausdoffness, for every $ g\in E $ there exist disjoint open neighbourhoods $ V_g$ and $ W_g $ of the points $ x$ and $ gx $. We define
  $ U= \bigcap_{g\in E}V_g\, \cap\, \bigcap_{g\in E}g^{-1}W_g $. Each set $ g^{-1}W_g $ is open because $ \Gamma $ acts by homeomorphisms, so $ U $ is open as a finite intersection of open sets, and it is also non-empty since it contains $ x $. Now, for any $ h\in E $ we have:
  \begin{align*}
    U\cap h U & =\Big(\underset{g\in E}{\bigcap}V_g \cap \underset{g\in E}{\bigcap}g^{-1}W_g \Big) \cap h\Big(\underset{g\in E}{\bigcap}V_g \cap \underset{g\in E}{\bigcap}g^{-1}W_g \Big)\\
    & = \Big(\underset{g\in E}{\bigcap} V_g \cap \underset{g\in E}{\bigcap}g^{-1} W_g \Big) \cap \Big(\underset{g\in E}{\bigcap}h V_g \cap \underset{g\in E}{\bigcap}hg^{-1} W_g \Big)\\
    & \subseteq V_h \cap hh^{-1} W_h = \emptyset.
  \end{align*}
  because $ V_h, W_h $ were chosen to be disjoint. Finally, since $U$ is non-empty and the action is minimal, we must have $\Gamma U = X$.
\end{proof}

To prove Proposition~\ref{no-metrisability} we will need the following.

\begin{lemma}\label{freeness} An action $\Gamma\acts X$ of a group $\Gamma$ on a compact Hausdorff space $X$ is free if and only if for every $\gamma\in \Gamma\setminus\set e$ there is a partition of unity $\set{\phi_i \in C(X) \st i\in I}$ indexed by a finite set $I$ such that $\gamma \phi_i \cdot \phi_i = 0$ for all $i\in I$.

In particular, if $\Gamma$ is countable and $\Gamma\acts X$ is a free action on a compact Hausdorff space $X$, then there exists a countable set $S \subseteq C(X)$ such that if $Y$ is a factor of $X$ satisfying $S\subseteq C(Y)$, then $\Gamma\acts Y$ is free.
\end{lemma}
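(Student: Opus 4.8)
The plan is to establish the two implications of the stated equivalence and then deduce the ``in particular'' clause by descending along a factor map.

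To prove that freeness implies the partition-of-unity condition, I would fix $\gamma\in\Gamma\setminus\set e$ and first build a finite open cover of $X$ by sets that are ``moved off themselves'' by $\gamma$. Since the action is free, $\gamma x\neq x$ for every $x\in X$, so Hausdorffness provides disjoint open sets $V_x\ni x$ and $W_x\ni\gamma x$; then $U_x\defeq V_x\cap\gamma^{-1}W_x$ is an open neighbourhood of $x$ with $\gamma U_x\subseteq W_x$ disjoint from $U_x\subseteq V_x$, hence $U_x\cap\gamma U_x=\emptyset$. Compactness yields a finite subcover $U_{x_1},\ldots,U_{x_n}$, and since compact Hausdorff spaces are normal there is a partition of unity $\set{\phi_1,\ldots,\phi_n}$ with $\operatorname{supp}\phi_j\subseteq U_{x_j}$. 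Then $\operatorname{supp}(\gamma\phi_j)=\gamma\operatorname{supp}\phi_j\subseteq\gamma U_{x_j}$ is disjoint from $\operatorname{supp}\phi_j$, so $\gamma\phi_j\cdot\phi_j=0$ for all $j$.

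For the converse, I would argue by contradiction: if $\gamma x=x$ for some $\gamma\neq e$ and $x\in X$, then $\gamma^{-1}x=x$ as well, and taking a partition of unity $\set{\phi_i\st i\in I}$ with $\gamma\phi_i\cdot\phi_i=0$, the relation $\sum_{i\in I}\phi_i(x)=1$ forces $\phi_{i_0}(x)\neq0$ for some $i_0$; but then $(\gamma\phi_{i_0})(x)\,\phi_{i_0}(x)=\phi_{i_0}(\gamma^{-1}x)\,\phi_{i_0}(x)=\phi_{i_0}(x)^2\neq0$, contradicting $\gamma\phi_{i_0}\cdot\phi_{i_0}=0$. For the ``in particular'' statement, assuming $\Gamma$ countable and $\Gamma\acts X$ free, I would, for each $\gamma\in\Gamma\setminus\set e$, use the first part to pick a finite partition of unity $\set{\phi_i^{\gamma}\st i\in I_\gamma}\subseteq C(X)$ with $\gamma\phi_i^{\gamma}\cdot\phi_i^{\gamma}=0$, and set $S\defeq\bigcup_{\gamma\neq e}\set{\phi_i^{\gamma}\st i\in I_\gamma}$, a countable subset of $C(X)$. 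Given a factor $Y$ of $X$, say with $\Gamma$-equivariant continuous surjection $\pi\colon X\to Y$, the hypothesis $S\subseteq C(Y)$ is to be read via the usual identification of $C(Y)$ with the $\Gamma$-invariant subalgebra $\pi^{*}C(Y)=\set{f\circ\pi\st f\in C(Y)}\subseteq C(X)$; the inclusion $C(Y)\hookrightarrow C(X)$ is $\Gamma$-equivariant since $\pi(\gamma x)=\gamma\pi(x)$. Hence each $\phi_i^{\gamma}$ is of the form $\psi_i^{\gamma}\circ\pi$ with $\psi_i^{\gamma}\in C(Y)$, and surjectivity of $\pi$ transports the identities $\psi_i^{\gamma}\geq0$, $\sum_i\psi_i^{\gamma}=1$, $\gamma\psi_i^{\gamma}\cdot\psi_i^{\gamma}=0$ from $X$ to $Y$. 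So for every $\gamma\neq e$ we obtain on the compact Hausdorff space $Y$ a partition of unity witnessing the condition of the lemma, and the already-proved converse implication gives that $\Gamma\acts Y$ is free.

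I do not expect a genuine obstacle here: the lemma is a soft reformulation. The points that need care are the construction of the finite cover with $U_{x_j}\cap\gamma U_{x_j}=\emptyset$ (combining pointwise freeness, compactness, and normality for the subordinate partition of unity) and the bookkeeping that the embedding $C(Y)\hookrightarrow C(X)$ is $\Gamma$-equivariant, which is exactly what allows the defining identities to descend from $X$ to $Y$.
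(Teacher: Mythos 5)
Your proposal is correct and follows essentially the same route as the paper: freeness plus Hausdorffness and compactness yield a finite cover by sets $U$ with $\gamma U\cap U=\emptyset$ and a subordinate partition of unity, the converse is the same computation (the paper phrases it via the open supports $\phi_i^{-1}((0,\infty))$ rather than evaluating at a fixed point, which is an immaterial variation), and the set $S$ is chosen identically as the union of these partitions over $\gamma\neq e$. Your write-up is only more explicit than the paper's about why the identities descend along the equivariant factor map $\pi\colon X\to Y$, which the paper leaves to the reader.
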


When $Y$ is a factor of $X$, there is a quotient map $p\colon X\to Y$, and hence in the above we can consider maps $\phi\in C(Y)$ as maps on $X$ by pre-composition with $p$.

\begin{proof}[Proof of Lemma~\ref{freeness}]
The freeness of an action $\Gamma\acts X$ is equivalent to the fact that for every $\gamma\in \Gamma\setminus\set e$ and every $x\in X$ we have $\gamma x \neq x$. When this is the case, by Hausdorffness there are disjoint open sets $U$ and $V$ with $x\in U$ and $\gamma x \in V$, and putting $U_{x}^\gamma \defeq U\cap \gamma^{-1} V$ we obtain a neighbourhood of $x$ satisfying $\gamma U_{x}^\gamma \cap U_{x}^\gamma = \emptyset$. The family $\set{U_{x}^\gamma\st {x\in X}}$ is an open covering of $X$, and by compactness there is a finite subset $I_\gamma\subseteq X$ such that already the subcover $\set{U_{x}^\gamma\st x\in I_\gamma}$ covers $X$. Let $\set{\phi_x^\gamma \st x\in I_\gamma}$ be a partition of unity subordinate to this cover. Then the fact that $\gamma U_{x}^\gamma \cap U_{x}^\gamma = \emptyset$ immediately implies $\gamma \phi_x^\gamma \cdot \phi_x^\gamma = 0$.

Conversely, assume that $\set{\phi_i \in C(X) \st i\in I}$ is a partition of unity satisfying $\gamma \phi_i \cdot \phi_i = 0$ for some $\gamma\in \Gamma\setminus\set e$. Then the family $\set{U_i \st i\in I}$ given by $U_i = (\phi_i)^{-1}((0,\infty))$ is a covering of $X$ satisfying $\gamma U_i \cap U_i = \emptyset$. This means that for every $x\in X$ there is $U_i$ containing $x$, and then necessarily $\gamma x \neq x$ because $\gamma x$ and $x$ belong to the disjoint sets $\gamma U_i$ and $U_i$.

The last assertion of the lemma follows by taking
\[S= \set{\phi_i^\gamma \st \gamma\in \Gamma\setminus\set e,\, i\in I_\gamma}.\qedhere\]
\end{proof}

\begin{proof}[Proof of Proposition~\ref{no-metrisability}]
By Lemma~\ref{freeness}, there is a countable set $S$ such that if $Y$ is a factor of $X$ satisfying $S\subseteq C(Y)$, then $\Gamma\acts Y$ is free. In the special case of commutative C$^*$-algebras, Lemma 1.3 of \cite{hw} asserts that for such a set $S \subseteq C(X)$ there is a metrisable factor $Y$ of $X$ such that $S\subseteq C(Y)$ and $\dim Y \leq \dim X$. By \cite[Theorem~3.8]{Szabo}, freeness and finite-dimensionality imply that $\Gamma \acts Y$ satisfies the \emph{bounded topological small boundary property with respect to $d=\dim Y$}, which in turn (together with freeness) implies the marker property for $\Gamma \acts Y$ by \cite[Theorem~4.6]{Szabo}.

Finally, it is immediate that if $V\subseteq Y$ is an open $F$-marker for some finite set $F\Subset \Gamma$ and the action $\Gamma\acts Y$, then $U = p^{-1}(V)$ is an open $F$-marker for  the action $\Gamma\acts X$, where $p$ denotes the factor map $X\to Y$. Thus, the marker property for $Y$ yields the marker property for $X$.
\end{proof}

\subsection{The proof of the main theorem}

\begin{proof}[Proof of \cref{main}]
  The dynamic asymptotic dimension equals $0$ only for actions of locally finite groups, and $\Gamma$ contains an infinite-order element, so $\operatorname{dad}(\Gamma\acts X) \geq 1$. Hence, it suffices to prove the reverse inequality.

  It is well known that an infinite virtually cyclic group $\Gamma$ has a maximal finite normal subgroup $H\vartriangleleft\Gamma$, and the quotient $\Gamma/H$ is either $\ZZ $ or $D_\infty = \langle s,t : s^2=t^2=e \rangle$. We will treat both cases simultaneously. Let $p$ denote the quotient map $\Gamma\to \Gamma/H$.

  To obtain the upper bound, we will use a covering very similar to the one in \cite{gwy}, but the corresponding reasoning has to be different in order to apply to type II virtually cyclic groups. Let a finite $E\Subset \Gamma$ be given. By enlarging it if necessary, one can assume that $E=p^{-1}(B_N)$ for some $N\in \NN_{>0}$, where $B_N$ denotes the ball of radius $N$ around the identity element in the group $\Gamma/H$ with respect to the standard metric on $\ZZ$ or the right-invariant word metric on $D_\infty$ associated with the generating set $\{s,t\}$.

  By the marker property, there exists an open subset $U\subseteq X$ such that $U\cap gU=\emptyset$ for $g\in p^{-1}(B_{5N})\setminus \set{e}$  and  $\Gamma U = X$. By compactness, there is a finite set $F \Subset \Gamma$ such that already $\set{g U \st g\in F}$ covers $X$. By normality, one can find sets $U^g$ for $g\in F$ such that $\overline {U^g} \subseteq g U$ and such that $\set{U^g \st g\in F}$ still covers $X$. The set $V \defeq \bigcup_{g\in F} g^{-1} U_g$ satisfies $\overline V \subseteq U$ and $F V = X$. Define
  \[U_0 =  p^{-1}(B_N) U \quad \mathrm{and} \quad U_1 = X\setminus p^{-1}(B_N) \overline{V}.\]
  Clearly, $\{U_0,U_1 \}$ forms an open cover of $X$.

  It now suffices to prove that for $i\in\set{0,1}$ the following set is finite
  \begin{equation*}
    F(U_i, E) = \left\{\; g\in \Gamma \;\;\bigg|\;\;
    \begin{aligned}
      &\text{there exist } g_1,\ldots,g_n\in E \text{ and } x\in U_i \text{ such that } \\
      &g=g_n\cdots g_2g_1 \text{ and } g_j\cdots g_1x\in U_i \text{ for all } j\in \{1,\ldots,n\}
    \end{aligned}
    \;\right\}.
  \end{equation*}

  We begin with $i=0$, claiming that $p(F(U_0, E))\subseteq B_{3N}$. Suppose for contradiction that there exist $ g_1, \ldots,  g_n \in E$ and $x\in U_0$ such that $p( g_n\cdots g_1)\notin B_{3N}$, and $x_j \defeq g_j\cdots g_1 x\in U_0$ for all $j\in \set{\range{0}{n}}$ (for $j=0$ one just gets $x_0 \defeq x$). There exists $k \in \set{\range{3}{n}}$ such that $p( g_k\cdots g_1)\in B_{3N}\setminus B_{2N}$. By the definition of $U_0$, the point $x$ can be expressed as $g x_U$ for some $g\in p^{-1}(B_N)$ and $x_U\in U$. Similarly, $x_k = g' x_U'$ for some $g'\in p^{-1}(B_N)$ and $x_U'\in U$. But then
  \[x_U' = (g')^{-1} x_k = (g')^{-1} g_k\cdots g_1 x = (g')^{-1} g_k\cdots g_1 g x_U,\]
  which is a contradiction, because
  $h \defeq (g')^{-1} g_k\cdots g_1 g$
  belongs to $p^{-1}(B_{5N} \setminus B_0) \subseteq p^{-1}(B_{5N}) \setminus \set{e}$,
  and we assumed that $U\cap h U = \emptyset$ for such $h$.

  \medskip We are done with $F(U_0, E)$, so let us now consider $F(U_1, E)$. Since $\Gamma V = X$, by compactness there is $M\in \NN$ such that already $p^{-1}(B_M) V$ equals $X$. We claim that $p(F(U_1, E))\subseteq B_{2M+N}$.

  As before, suppose for contradiction that there exist $ g_1, \ldots,  g_n \in E$ and $x\in U_1$ such that $g_n\cdots g_1\notin p^{-1}(B_{2M+N})$, and $x_j \defeq g_j\cdots g_1 x\in U_1$ for all $j\in \set{\range{0}{n}}$.

  Note that $\Gamma / H$ is isometric to $\ZZ \subseteq \RR$. Let $\iota$ denote the isometry $\Gamma / H \to \ZZ$ that maps the neutral element of $\Gamma / H$ to $0\in \ZZ$ and such that $\iota\circ p(g_n\cdots g_1) \eqdef L > 0$. Since $p(g_k)\in B_N$ for all $k\in \set{\range{1}{n}}$, there exists such $k$ that $\iota\circ p(g_k\cdots g_1) \eqdef l \in [M+1,M+N]$. Note that $| g_n\cdots g_{k+1} | = d(g_n\cdots g_1, g_k\cdots g_1) = L-l$.

  Since $p^{-1}(B_M) V = X$, there is $g\in p^{-1}(B_M)$ such that $ g^{-1} x_k \in V$. By multiplying the sequence $(g_j\cdots g_1)_{j=0}^n$ by $g_1^{-1} \cdots g_k^{-1} g$ on the right, we obtain the following sequence $(h_j)_{j=0}^n$:
  \[g_1^{-1} \cdots g_k^{-1}g,\;\;g_2^{-1} \cdots g_k^{-1}g,\;\;\ldots,\;\;g_k^{-1}g,\;\;g,\;\;g_{k+1} g,\;\;\ldots,\;\;g_n \cdots g_{k+1} g. \]
  Note that the length of $p(h_0)$ is at most $l+M\leq 2M+N < L$, and the length of $p(h_n)$ is at most $L - l + M < L$. Since the distance between $p(h_0)$ and $p(h_n)$ equals $L$, their images under $\iota$ must have opposite signs. Consequently, there exists $j\in \set{\range{0}{n}}$ such that $p(h_j)\in B_N$. This is a contradiction: on the one hand, we assumed that $g_j\cdots g_1 x \in U_1 = X\setminus p^{-1}(B_N) \overline V$, and on the other hand
  \[g_j\cdots g_1 x = (g_j\cdots g_1)(g_1^{-1} \cdots g_k^{-1} g)(g^{-1} g_k \cdots g_1) x = h_j g^{-1} x_k \in p^{-1}(B_N) V. \qedhere \]
\end{proof}

\vspace{.3 cm}

It is not difficult to see that if $K$ is a finite normal subgroup of $\Gamma$, and $\Gamma$ acts on $X$, then
\begin{equation}\label{quotient-by-finite}
 \dad(\Gamma\acts X) \leq \dad(\Gamma/K \acts X/K).
\end{equation}
Indeed, let us denote $q\colon \Gamma\to \Gamma/K$ and $p\colon X\to X/K$. If $E \Subset \Gamma$ and $\{\range{U_0}{U_d}\}$ is a covering of $X/K$ such that the sets $F(U_i, q(E))$ are finite for $i\in\set{\range{0}{d}}$, then $\{\range{p^{-1}(U_0)}{p^{-1}(U_d)}\}$ is a covering of $X$, and the sets $F(p^{-1}(U_i), E)$ are finite as contained in the respective $q^{-1}(F(U_i, q(E)))$.

Using inequality \eqref{quotient-by-finite}, one can slightly generalise \cref{main} to cover some non-free actions, as it may happen that $\Gamma/K \acts X/K$ is free even if $\Gamma \acts X$ is not, while the remaining assumptions of minimality and finite covering dimension are preserved when passing from $\Gamma \acts X$ to $\Gamma/K \acts X/K$. Inequality \eqref{quotient-by-finite} could also be used to give an slight extension of  \cref{finiteness-to-one} for the case of the action of a virtually cyclic group of \emph{type I} (by reducing the problem to that of a $\mathbb Z$-action). However we do not know at this stage if  \cref{finiteness-to-one} holds for the action of a virtually cyclic group of \emph{type II}.

\subsection{Consequences for nuclear dimension}

\begin{proof}[Proof of Corollary~\ref{nuclear for virtually cyclic}]
If $\dim X=\infty$, we have nothing to prove. So we only have to concentrate on the case where $\dim X$ is finite.

Assume first that $X$ is metrisable. Then Theorem~8.6 of \cite{gwy} (together with their Lemma~5.4 and Remark~8.5 (i)) asserts that
\[\dimnuc(C(X)\rtimes_\mathrm{r}\Gamma) \leq (\dad(\Gamma\acts X) + 1)(\dim X + 1) - 1,\]
and we have $\dad(\Gamma\acts X) \leq 1$ by Theorem~\ref{main} ($\dad (\Gamma \acts X)$ equals $1$ when $\Gamma$ is infinite, and it vanishes by definition when $\Gamma$ is finite).

Now, we consider the general case. By Lemma~\ref{freeness}, there is a countable subset $A \subseteq C(X)$ such that when $Y$ is a factor of $X$ satisfying $C(Y) \supseteq A$, the action on $Y$ is free. In the special case of commutative C$^*$-algebras, Lemma 1.3 of \cite{hw} asserts that for any countable subset $B \subseteq C(X)$ there is a metrisable factor $Y_B$ of $X$ such that $B\subseteq C(Y_B)$ and $\dim Y_B\leq \dim X$. If we assume additionally $A\subseteq B$, then the already proven part gives $\dimnuc(C(Y_B)\rtimes_\mathrm{r}\Gamma)\leq 2\cdot \dim X +1$. Clearly, the union of all such C$^*$-algebras $C(Y_B)\rtimes_\mathrm{r}\Gamma$ is $C(X)\rtimes_\mathrm{r}\Gamma$, and they form a direct system under inclusion, so we conclude that
\[\dimnuc \left(C(X)\rtimes_\mathrm{r}\Gamma\right) \leq \liminf_{Y_B} \dimnuc \left(C(Y_B)\rtimes_\mathrm{r}\Gamma\right) \leq 2\cdot \dim X +1\]
by the fact that nuclear dimension behaves well under direct limits (\cite[Proposition~2.3~(iii)]{wz}).
\end{proof}

We end this note by raising the following question inspired by Theorem~\ref{main}: 
\begin{question}
Does finite dynamic asymptotic dimension of an action of $\mathbb{Z}$ on a compact metric space imply the marker property of the action? 
\end{question}
In the above case, the action must be free and it is an open problem whether freeness implies the marker property in general (see \cite[Problem~5.4]{Gutman}).

\end{document}